\setlist[itemize]{%
labelsep=8pt,%
labelindent=0.5\parindent,%
itemindent=0pt,%
leftmargin=*,%
listparindent=-\leftmargin%
}
\theoremstyle{plain}
    \newtheorem{thm}{Theorem}[section]
    \newtheorem{lem}[thm]   {Lemma}
    \newtheorem{cor}[thm]   {Corollary}
\theoremstyle{definition}
    \newtheorem{defn}[thm]  {Definition}
    \newcommand{\Z}{\mathbb{Z}}
\newcommand{\C}{\mathbb{C}}
\newcommand{\fe}{\mathfrak{e}}
\newcommand{\F}{\mathbb{K}}
\newcommand{\fF}{\mathfrak{F}}
\newcommand{\M}{\mathcal{M}}
\newcommand{\tR}{\tilde R}
\renewcommand{\tt}{\tilde t}
\newcommand{\tF}{\tilde \F}
\newcommand{\tor}{\tau}
\newcommand{\set}[1]{\left\{#1\right\}}
\newcommand{\pa}[1]{\left(#1\right)}
\newcommand{\abs}[1]{\left|#1\right|}
\newcommand{\ab}{\mathrm{ab}}
\newcommand{\cPn}{[P_n,P_n]}
\newcommand{\Fnlog}{F_n^{\log}}
\newcommand{\tFnlog}{\tilde F_n^{\log}}
\newcommand{\Ch}{\mathfrak{Ch}}
\newcommand{\Chb}{\Ch_{\bullet}}
\newcommand{\tChb}{\tilde\Chb}
\newcommand{\Chlog}{\Ch^{\log}}
\newcommand{\Chlogb}{\Chlog_{\bullet}}
\newcommand{\tChlog}{\tilde\Ch^{\log}}
\newcommand{\tChlogb}{\tChlog_{\bullet}}
\newcommand{\MtChlogb}{\M\tChlogb}
\newcommand{\Sal}{Sal}
\newcommand{\Sallog}{\Sal^{\log}}
\renewcommand{\i}{\iota}
\renewcommand{\phi}{\varphi}
\DeclareMathOperator{\cd}{cd}
\author{Andrea Bianchi}
\title[Homology of commutators of pure braids]{On the homology of the commutator subgroup of the pure braid group}
\date{\today}
\address{Mathematics Institute,
University of Bonn,
Endenicher Allee 60, Bonn,
Germany
}
\email{bianchi@math.uni-bonn.de}
\keywords{Pure braid group, commutator subgroup, cohomological dimension.}
\subjclass[2010]{20F36, 
55R20, 
55R35, 
55R80, 
16S34, 
20C07. 
}
\thanks{This work was supported by the Deutsche Forschungsgemeinschaft (DFG, German Research
Foundation) under Germany’s Excellence Strategy (EXC-2047/1, 390685813).}
\begin{document}

\begin{abstract}
 We study the
 homology of $\cPn$, the commutator subgroup of the pure braid group on $n$ strands, and
 show that $H_l(\cPn)$ contains a free abelian group of infinite rank for all
 $1\leq l\leq n-2$. As a consequence we determine the cohomological dimension of $\cPn$:
 for $n\geq 2$ we have $\cd(\cPn)=n-2$. 
\end{abstract}
\maketitle

\section{Introduction}
Let $n\geq 2$ and denote by $F_n$ the \emph{ordered configuration space} of $n$ points in the complex plane:
 \[
  F_n=\set{(z_1,\dots,z_n)\in\C^n \,|\, z_i\neq z_j\;\forall i\neq j}.
 \]
 The \emph{pure braid group} on $n$ strands is defined as $P_n=\pi_1(F_n)$.

In \cite{BianchiRecio} David Recio-Mitter and the author posed the question of determining the cohomological
dimension of $\cPn$, the commutator subgroup of the pure braid group, and conjectured that, for $n\geq 2$,
\[
 \cd(\cPn)=n-2.
\]

In this work we prove this conjecture by computing a large part of the homology of $\cPn$; in particular
we prove that $H_*(\cPn)$ contains a free abelian group of infinite rank in all degrees $1\leq *\leq n-2$ (see Theorem \ref{thm:HtensortF},
Corollary \ref{cor:cdcPn} and Theorem \ref{thm:middle}).

To the best of the author's knowledge there is no result in the literature concerning the homology of $\cPn$ for large values of $n$;
on the contrary the homology of the commutator subgroup of Artin's \emph{full}
braid group \cite{Artin} has been extensively studied \cite{Frenkel88, Markaryan96, DcPS01, Callegaro06}, as well as the homology
of Milnor fibers of discriminant fibrations associated with other hyperplane arrangements in $\C^n$ \cite{DcPSS99, Denham02, CaMoSal08, Settepanella09}.

Our strategy is the following. We consider the Salvetti complex $\Sal_n$ associated with the $n$-th braid arrangement:
the cell complex $\Sal_n$ is a classifying space for $P_n$, and it has a
covering $\Sallog_n$ which is a classifying space for $\cPn$. The group $P_n^{\ab}\simeq \Z^{\binom{n}{2}}$
acts on $\Sallog_n$ by deck transformations, and the action is cellular: hence the associated cellular chain complex
$\Chlogb$ is a chain complex of modules over the commutative ring $\Z[P_n^{\ab}]$, and consequently
the homology $H_*\pa{\cPn}$ is also a $\Z[P_n^{\ab}]$-module.

We replace $\Chlogb$ with a homotopy equivalent subcomplex $\tChlogb$; the chain complex $\tChlogb$ is only
invariant for the action of a certain subgroup $\Z^{\binom{n}{2}-1}\subset P_n^{\ab}$, and we restrict this action
also in homology, i.e. we consider $H_*\pa{\cPn}$ as a module over 
the commutative ring $\Z\left[\Z^{{\binom{n}{2}}-1}\right]$.

We define a filtration on $\tChlogb$; the associated Leray spectral sequence, after localisation to the
quotient field of $\Z\left[\Z^{{\binom{n}{2}}-1}\right]$, collapses on its first page: more precisely we have $E^1_{p,q}=0$
for all $(p,q)\neq (n-2,0)$. This proves the statement for $H_{n-2}(\cPn)$ (see Theorem \ref{thm:HtensortF}).

To prove the statement in lower degrees we consider the interaction between commutator subgroups of different pure
braid groups (see Theorem \ref{thm:middle}).

\section{Preliminaries}
We recall some classical constructions and results about configuration spaces and pure braid groups.

For all $1\leq i\leq n+1$ there is a map $\phi_i\colon F_{n+1}\to F_n$, which
forgets the $i-$th point of each configuration. This is a fiber bundle with fiber the punctured plane $\C\setminus\set{n\mbox{ points}}$,
called the Fadell-Neuwirth fibration (see \cite{FadellNeuwirth}):
\[
\begin{tikzcd}
  \C\setminus\set{n\mbox{ points}}\ar[r] & F_{n+1}\ar[r,"\phi_i"] & F_n.
\end{tikzcd}
\]
The space $\C\setminus\set{n\mbox{ points}}$ is a classifying space for the free group on $n$ generators $\Z^{*n}$,
in particular it is an aspherical space. An induction argument shows that $F_n$ is also aspherical, and
therefore $F_n$ is a classifying space for its fundamental group $P_n$.
We obtain a short exact sequence
\[
 1\to \Z^{*n}\to P_{n+1}\to P_n\to 1.
\]
\begin{defn}
\label{defn:Pnab}
For all $1\leq i<j\leq n$ there is a forgetful map $\psi_{ij}\colon F_n\to F_2$, which forgets
all points of a configuration except the $i$-th and the $j$-th. This map of spaces induces a map, that we still
call $\psi_{ij}$, on fundamental groups:
\[
 \psi_{ij}\colon P_n\to P_2\simeq\Z.
\]
The collection of all these maps gives a homomorphism of groups $\psi\colon P_n\to\Z^{\binom{n}{2}}$.
\end{defn}
A classical result by Arnold \cite{Arnold:coloredbraid} states that $\psi$ is
the abelianisation homomorphism, i.e. $P_n^{\ab}\simeq\Z^{\binom{n}{2}}$ along the map induced by $\psi$.
In this article we focus on the group $\cPn=\ker\psi$, the commutator subgroup of the pure braid group.

\section{Two classifying spaces for \texorpdfstring{$\cPn$}{[Pn,Pn]}}
We introduce two convenient models for the classifying space of $\cPn$.

\begin{defn}
We define the space $\Fnlog$. A point in $\Fnlog$ is determined by a configuration
$(z_1,\dots,z_n)\in F_n$ together with a choice $w_{ij}\in\C$ of a logarithm of $(z_j-z_i)$, for all $i<j$:
\[
 \Fnlog=\set{\pa{\pa{z_i}_{1\leq i\leq n},\pa{w_{ij}}_{1\leq i<j\leq n}}\,|\,z_j-z_i=e^{w_{ij}}\quad\forall 1\leq i<j\leq n}.
\]
This space has a topology as subspace of $\C^n\times \C^{\binom{n}{2}}$.
\end{defn}
There is a covering map $p\colon\Fnlog\to F_n$, which forgets the numbers $w_{ij}$. The fiber is isomorphic to
$\Z^{\binom{n}{2}}$: to see this fix a point $\pa{(z_i),(\bar w_{ij})}$ lying over some point $(z_i)\in F_n$.
Let $\pa{(z_i),(w_{ij})}$ be any
other point lying over $(z_i)$: then there are integers $(k_{ij})_{1\leq i<j\leq n}$ such that
$w_{ij}-\bar w_{ij}=2\pi\sqrt{-1}k_{ij}$ for all $1\leq i<j\leq n$. Viceversa
given integers $(k_{ij})_{1\leq i<j\leq n}$ one can define a point $\pa{(z_i),(w_{ij})}$ in the fiber of $(z_i)$ by setting
$w_{ij}=\bar w_{ij}+2\pi\sqrt{-1}k_{ij}$ for all $1\leq i<j\leq n$.

The last construction gives a free action of $\Z^{\binom{n}{2}}$ on $\Fnlog$; this is an action by
deck transformations of $p$ and is transitive on fibers of $p$: therefore $\Z^{\binom{n}{2}}$ 
is the whole group of deck transformations of $p$ and there is a short
exact sequence
\[
 1\to \pi_1(\Fnlog)\to \pi_1(F_n)\to \Z^{\binom{n}{2}}\to 1.
\]
We can then conclude that $\cPn\subseteq \pi_1(\Fnlog)$, because $\cPn$ is contained in the kernel of any map
from $P_n$ to an abelian group.

On the other hand the maps
$\psi_{ij}\colon F_n\to F_2$ lift to maps $\psi_{ij}^{\log}\colon \Fnlog\to F_2^{\log}$: the map $\psi_{ij}^{\log}$ is defined
by forgetting all data except $z_i,z_j$ and $w_{ij}$.

The space $F_2^{\log}$ is contractible: this is a particular case
of Lemma \ref{lem:tFnlog&Fnlog}, and can be checked also directly.
Therefore $\pi_1(\Fnlog)$ is a subgroup
of $P_n$ contained in the kernel of all maps $\psi_{ij}$, i.e. $\pi_1(\Fnlog)\subseteq\cPn$.
We obtain the following lemma.
\begin{lem}
 \label{lem:Fnlog=BcPn}
 The space $\Fnlog$ is a classifying space for the group $\cPn$.
\end{lem}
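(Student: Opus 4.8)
The plan is to observe that, thanks to the discussion preceding the statement, the only assertion still requiring proof is that $\Fnlog$ is aspherical. Indeed, that discussion already identifies the fundamental group: the short exact sequence
\[
 1\to \pi_1(\Fnlog)\to \pi_1(F_n)\to \Z^{\binom{n}{2}}\to 1
\]
realises $\pi_1(\Fnlog)$ as the kernel of a surjection of $P_n$ onto an abelian group, so $\cPn\subseteq\pi_1(\Fnlog)$, while the contractibility of $F_2^{\log}$ forces $\pi_1(\Fnlog)$ into $\ker\psi_{ij}$ for every $i<j$, hence into $\bigcap_{i<j}\ker\psi_{ij}=\ker\psi=\cPn$. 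Combining the two inclusions gives $\pi_1(\Fnlog)=\cPn$, and it remains only to check that the higher homotopy groups of $\Fnlog$ vanish.

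For this I would use that $p\colon\Fnlog\to F_n$ is a covering map, hence a fibration with discrete fibre $\Z^{\binom{n}{2}}$. Feeding this into the long exact sequence of homotopy groups, the homotopy groups $\pi_k$ and $\pi_{k-1}$ of a discrete space vanish for $k\geq 2$, so the sequence yields isomorphisms $\pi_k(\Fnlog)\xrightarrow{\sim}\pi_k(F_n)$ for all $k\geq 2$. Since $F_n$ is a classifying space for $P_n$ it is aspherical, whence $\pi_k(\Fnlog)\cong\pi_k(F_n)=0$ for every $k\geq 2$.

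Putting the pieces together, $\Fnlog$ is a connected (the covering $p$ is connected since $\psi$ is onto) aspherical space with fundamental group isomorphic to $\cPn$, so it is a model for the classifying space of $\cPn$, as claimed. I do not expect a genuine obstacle here: the fundamental-group computation is furnished by the paragraphs above, and the asphericity is the purely formal statement that a covering of an aspherical space is aspherical. The single point meriting care is that $p$ is the covering associated with the abelianisation $\psi$ — equivalently that its group of deck transformations is the full $\Z^{\binom{n}{2}}$ acting transitively on fibres — so that the fibre subgroup sitting inside $P_n$ is precisely the commutator subgroup $\cPn$ and not some larger normal subgroup; but this is exactly the content of the two inclusions recalled above, together with Arnold's identification of $\psi$ with the abelianisation.
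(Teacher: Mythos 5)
Your argument is correct and follows essentially the same route as the paper, which gives no separate proof of Lemma \ref{lem:Fnlog=BcPn} but derives it directly from the preceding discussion: the two inclusions $\cPn\subseteq\pi_1(\Fnlog)$ (from the deck-transformation exact sequence) and $\pi_1(\Fnlog)\subseteq\cPn$ (from the contractibility of $F_2^{\log}$), with asphericity of $\Fnlog$ left implicit as a covering of the aspherical space $F_n$. Your only additions — spelling out the long exact sequence of the covering and noting connectedness via surjectivity of $\psi$ — are accurate and fill in exactly what the paper leaves tacit.
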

The action of $P_n^{\ab}$ on $\Fnlog$
induces an action of the ring $\Z[P_n^{\ab}]$ on $H_*(\Fnlog)$, so our first attempt is to study
$H_*(\Fnlog)=H_*(\cPn)$ as a module over this ring.

\begin{defn}
Let $R(n)=\Z[P_n^{\ab}]$ be the ring of Laurent
polynomials in $\binom{n}{2}$ variables $\Z\left[t_{ij}^{\pm 1}|1\leq i<j\leq n\right]$. The variable $t_{ij}$
corresponds to the generator of $P_n^{\ab}\simeq \Z^{\binom{n}{2}}$ which is \emph{dual} to the map
$\psi_{ij}\colon P_n\to P_2$, i.e. for all $i<j$ and $k<l$ we have $\psi_{ij}(t_{kl})=\delta_{ik}\delta_{jl}$.

The ring $R(n)$ is a domain and we call $\F(n)$
its quotient field.
\end{defn}
The following lemma tells us that $H_*(\Fnlog)$ cannot be too \emph{large}.
\begin{lem}
\label{lem:H*istauntorsion}
\[
 H_*(\Fnlog)\otimes_{R(n)} \F(n)=0.
\]
\end{lem}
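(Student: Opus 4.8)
The plan is to reinterpret the left-hand side as the homology of $F_n$ with a generic rank-one local system and then to kill it using the scaling symmetry of configuration space.

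First I would translate the statement into local-system homology. Since $p\colon\Fnlog\to F_n$ is the regular covering associated with the normal subgroup $\cPn=\ker\psi$ of $P_n$, with deck group $P_n^{\ab}$, there is a natural isomorphism of $R(n)$-modules $H_*(\Fnlog)\cong H_*(F_n;R(n))$, where $R(n)=\Z[P_n^{\ab}]$ is viewed as the local system on $F_n$ induced by $\psi\colon P_n\to P_n^{\ab}$ acting by left multiplication. Because $\F(n)$ is the localisation of $R(n)$ at $R(n)\setminus\set{0}$, hence flat, tensoring commutes with taking homology and
\[
 H_*(\Fnlog)\otimes_{R(n)}\F(n)\cong H_*\pa{F_n;\mathcal L},
\]
where $\mathcal L$ is the rank-one local system over the field $\F(n)$ on which a loop $g\in P_n$ acts by multiplication by the monomial $\prod_{i<j}t_{ij}^{\psi_{ij}(g)}\in\F(n)^{\times}$. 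It therefore suffices to prove $H_*(F_n;\mathcal L)=0$.

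Next I would use the free scaling action of $\C^{\times}$ on $F_n$ about the first point, namely $a\cdot(z_1,\dots,z_n)=\pa{z_1,\,z_1+a(z_2-z_1),\dots,z_1+a(z_n-z_1)}$. The $\C^{\times}$-equivariant map $(z_i)\mapsto z_2-z_1$ exhibits $F_n$ as a trivial principal $\C^{\times}$-bundle with global slice $B=\set{(z_i)\in F_n\,|\,z_2-z_1=1}$; explicitly $(z_i)\mapsto\pa{z_2-z_1,\,(z_2-z_1)^{-1}\cdot(z_i)}$ is a homeomorphism $F_n\cong\C^{\times}\times B$. Consequently $\pi_1(F_n)\cong\Z\times\pi_1(B)$, with the $\Z$-factor generated by the scaling loop $\gamma$, and since $\mathcal L$ is given by a character of $\pi_1(F_n)$ it splits as an external tensor product $\mathcal L\cong\mathcal L'\boxtimes\mathcal L''$ of rank-one local systems on $\C^{\times}$ and on $B$.

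Finally I would compute the monodromy of $\mathcal L'$. Tracing $\gamma$, i.e. $a=e^{2\pi\sqrt{-1}t}$ with the slice coordinate held fixed, every difference $z_j-z_i$ winds once around the origin, so $\psi_{ij}(\gamma)=1$ for all $i<j$ and $\gamma$ acts on $\mathcal L$ by the scalar $\lambda=\prod_{i<j}t_{ij}$. As $\lambda\neq 1$ in $\F(n)^{\times}$, the operator $\lambda-1$ is invertible, so $H_0(\C^{\times};\mathcal L')=\F(n)/(\lambda-1)=0$ and $H_1(\C^{\times};\mathcal L')=\ker(\lambda-1)=0$. By the K\"unneth formula for local systems over the field $\F(n)$ one then gets $H_*(F_n;\mathcal L)\cong H_*(\C^{\times};\mathcal L')\otimes_{\F(n)}H_*(B;\mathcal L'')=0$, as required. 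I expect the only delicate point to be the verification that $\mathcal L$ genuinely splits as an external product with $\C^{\times}$-monodromy $\prod_{i<j}t_{ij}$; once the trivial $\C^{\times}$-bundle structure and the winding computation are in place, the K\"unneth step is routine.
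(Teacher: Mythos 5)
Your proof is correct, but it takes a genuinely different route from the paper's. The paper argues entirely on the covering space: it writes down the explicit rotation homotopy $\mathrm{H}\pa{((z_i),(w_{ij}));\theta}=\pa{(e^{\theta\sqrt{-1}}z_i),(w_{ij}+\theta\sqrt{-1})}$ on $\Fnlog$, observes that at $\theta=2\pi$ this is exactly the deck transformation $\prod_{i<j}t_{ij}$, and concludes that $H_*(\Fnlog)$ is $\bigl[\pa{\prod_{i<j}t_{ij}}-1\bigr]$-torsion as an $R(n)$-module, hence dies under localisation. You instead pass to local-system homology on the base, split $F_n\cong\C^{\times}\times B$ via the scaling slice $z_2-z_1=1$, compute that the $\C^{\times}$-factor carries monodromy $\prod_{i<j}t_{ij}\neq 1$, and invoke the K\"unneth formula over the field $\F(n)$. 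The geometric core is the same in both arguments --- a circle of symmetries of $F_n$ realising the class $\prod_{i<j}t_{ij}\in P_n^{\ab}$ --- but the formal packaging differs: the paper's torsion argument is shorter, needs no product decomposition or K\"unneth theorem, and in fact proves the stronger statement that a single explicit element of $R(n)$ annihilates $H_*(\cPn)$ (a fact the paper reuses later, e.g.\ in the discussion of $\tor_S$ after Theorem \ref{thm:HtensortF}); your argument is heavier but yields extra structure, namely the splitting $\pi_1(F_n)\cong\Z\times\pi_1(B)$ and the vanishing of the full local-system homology $H_*(F_n;\mathcal{L})$ in one Künneth stroke. All the steps you flag as delicate do check out: the scaling action is free, the map $(z_i)\mapsto z_2-z_1$ trivialises the bundle, a rank-one character of a product group does split as an external tensor product, and $\psi_{ij}(\gamma)=1$ for every $i<j$ since each difference $z_j-z_i$ is multiplied by $a$ under the scaling.
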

\begin{proof}
 Consider the following homotopy $\mathrm{H}\colon\Fnlog\times[0,2\pi]\to\Fnlog$ of the space $\Fnlog$ into itself. 
 At time $0$, the map $\mathrm{H}(\cdot;0)$ is the identity of $\Fnlog$; at time $\theta$ we rotate each configuration
 by an angle $\theta$ counterclockwise, adjusting logarithms:
 \[
 \mathrm{H}\pa{((z_i),(w_{ij}));\theta}=\pa{ (e^{\theta\sqrt{-1}}z_i),(w_{ij}+\theta\sqrt{-1})}.
 \]
 At time $2\pi$, the map $\mathrm{H}(\cdot;2\pi)$ preserves all $z_i$'s
 and shifts all $w_{ij}$'s by $2\pi\sqrt{-1}$: this last map is precisely the map
 \[
 \prod_{1\leq i<j\leq n}t_{ij}\colon\Fnlog\to\Fnlog,
 \]
 i.e. the product of all deck transformations $t_{ij}\colon\Fnlog\to\Fnlog$.
 
 Since $\prod_{1\leq i<j\leq n}t_{ij}$ is homotopic to the identity of
 $\Fnlog$, it induces the identity map on $H_*\pa{\Fnlog}$.
 
 Hence $H_*(\Fnlog)$, as a $R(n)$-module, is $\left[(\prod_{\leq i<j\leq n}t_{ij})-1\right]$-torsion, in particular
 its $\F(n)$-localisation vanishes.  
\end{proof}

The proof of the previous lemma tells us that the variable $t_{12}$ acts on $H_*(\cPn)$ as the product
$\prod_{(i<j)\neq (1,2)}t_{ij}^{-1}$; therefore it seems convenient to replace $R(n)$ with a \emph{smaller} ring, containing
one variable less.

\begin{defn}
\label{defn:tR}
 We call
 \[
 \tR(n)=\Z\left[\tilde t_{ij}^{\pm 1} \,|\,1\leq i<j\leq n\, , \,(i,j)\neq(1,2)\right]
 \]
 the ring of Laurent
 polynomials in ${\binom{n}{2}}-1$ variables. $\tR(n)$ is naturally a subring of $R(n)$ by identifying each $\tt_{ij}$
 with the corresponding $t_{ij}$, and therefore each $R(n)$-module is also a $\tR(n)$-module.
 
 We can also identify $\tR(n)$ as the quotient of $R(n)$ by the ideal generated by the element $\pa{\prod_{1\leq i<j\leq n}t_{ij}}-1$.
 The composition of maps of rings $\tR(n)\subset R(n)\to\tR(n)$ is the identity of $\tR(n)$.
 
 The ring $\tR(n)$ is a
 domain, and we call $\tF(n)$
 its quotient field.
\end{defn}

We want now to study $H_*(\cPn)$ as a $\tR(n)$-module. We introduce our second model of a classifying space for $\cPn$.
\begin{defn}
 The space $\tFnlog$ is defined as the subspace of $\Fnlog$ of configurations $\pa{(z_i),(w_{ij})}$
 such that $z_2=1$, $z_1=0$ and $w_{12}=0$.
\end{defn}
The space $\tFnlog$ is not invariant under the action of the whole group $P_n^{\ab}$ on $\Fnlog$: the action of 
$t_{12}$ consists in shifting $w_{12}$ by $2\pi\sqrt{-1}$, and this is not allowed inside $\tFnlog$.
The other generators $\tt_{ij}$ of $P_n^{\ab}$ preserve $\tFnlog$; we conclude that $H_*(\tFnlog)$ has a
natural structure of $\tR(n)-$module, and the inclusion map $\tFnlog\subset\Fnlog$ induces a map
of $\tR(n)$-modules in homology.

\begin{lem}
\label{lem:tFnlog&Fnlog}
 $\tFnlog$ is a deformation retract of $\Fnlog$, and therefore it is also a classifying space for $\cPn$.
\end{lem}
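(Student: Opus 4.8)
The plan is to write down an explicit strong deformation retraction of $\Fnlog$ onto $\tFnlog$. The idea is that every configuration can be put into the normalized form required by $\tFnlog$ (namely $z_1=0$, $z_2=1$ and $w_{12}=0$) by applying the unique affine transformation $z\mapsto (z-z_1)/(z_2-z_1)$, which translates $z_1$ to the origin and rescales so that $z_2$ lands at $1$. Under such an affine map every difference $z_j-z_i$ gets multiplied by $(z_2-z_1)^{-1}$, so the natural way to transport the logarithms is to subtract $w_{12}$ from each $w_{ij}$; in particular $w_{12}$ becomes $0$, as wanted. To turn this normalization into a homotopy I would interpolate, translating by $-s\,z_1$ and rescaling by $e^{-s w_{12}}$ at time $s\in[0,1]$.

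Concretely, I would set
\[
D\bigl(((z_i)_i,(w_{ij})_{i<j});\,s\bigr)=\Bigl(\bigl(e^{-s w_{12}}(z_i-s\,z_1)\bigr)_i,\ \bigl(w_{ij}-s\,w_{12}\bigr)_{i<j}\Bigr),
\]
and the main work is to check that $D$ is a well-defined map $\Fnlog\times[0,1]\to\Fnlog$ with the required properties. First, the points stay distinct: for $i\neq j$ one has $z_i(s)-z_j(s)=e^{-s w_{12}}(z_i-z_j)$, a nonzero multiple of $z_i-z_j$. Next, the logarithm relation is preserved, since $z_j(s)-z_i(s)=e^{-s w_{12}}e^{w_{ij}}=e^{w_{ij}-s w_{12}}=e^{w_{ij}(s)}$, so the image really lies in $\Fnlog$. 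Finally one reads off that $D(\cdot\,;0)$ is the identity, that $D(\cdot\,;1)$ lands in $\tFnlog$ (indeed $z_1(1)=0$, $z_2(1)=e^{-w_{12}}(z_2-z_1)=1$ and $w_{12}(1)=0$), and that on $\tFnlog$ — where $z_1=0$, $z_2=1$, $w_{12}=0$ — both the scaling exponent and the translation vanish, so $D$ fixes $\tFnlog$ pointwise for all $s$. This exhibits $D$ as a strong deformation retraction.

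The one point that requires genuine care, rather than routine computation, is the rescaling step. As a self-map of $F_n$ the assignment $z\mapsto (z-z_1)/(z_2-z_1)$ is perfectly continuous, but trying to connect it to the identity through scalings forces one to choose, continuously in the configuration, a path in $\C^{*}$ from $z_2-z_1$ to $1$; this is obstructed by the monodromy of $\C^{*}$ around the origin and cannot be done on $F_n$ itself. The role of the logarithm coordinate $w_{12}$ is precisely to remove this obstruction: the factor $e^{-s w_{12}}$ moves $z_2-z_1$ along the path $s\mapsto e^{(1-s)w_{12}}$, which depends continuously on the point of $\Fnlog$ and reaches $1$ at time $s=1$. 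This is why the construction succeeds on $\Fnlog$ although it would fail on $F_n$. Once $D$ is in hand, a deformation retract is in particular a homotopy equivalence, so by Lemma \ref{lem:Fnlog=BcPn} the space $\tFnlog$ is again a classifying space for $\cPn$, which proves the lemma.
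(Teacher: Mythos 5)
Your proposal is correct and uses exactly the same homotopy as the paper, namely $\mathrm{H}\pa{((z_i),(w_{ij}));t}=\pa{(e^{-tw_{12}}(z_i-tz_1)),(w_{ij}-tw_{12})}$; the paper simply writes down the formula, while you additionally verify that it is well defined, lands in $\tFnlog$ at time $1$, and fixes $\tFnlog$ pointwise. Nothing further is needed.
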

\begin{proof}
 We define a homotopy $\mathrm{H}\colon \Fnlog\times[0,1]\to\Fnlog$ starting with the identity of $\Fnlog$ and ending
 with a retraction onto $\tFnlog$; the space $\tFnlog$ will be fixed pointwise throughout the homotopy.
 
 Let $\pa{(z_i),(w_{ij})}\in\Fnlog$. Then
 \[
  \mathrm{H}(\pa{(z_i),(w_{ij})};t)=\pa{(e^{-tw_{12}}\cdot (z_i-tz_1)),(w_{ij}-tw_{12})}.
 \]
\end{proof}

\section{Chain complexes}
In this section let $n\geq 2$ be fixed. Our next aim is to describe explicitly a chain complex that
computes the homology of $\Fnlog$. We first recall the classical
chain complex computing the homology of $F_n$: it can be seen both as the dual of the reduced cochain complex
of the one-point-compactification of $F_n$, in the spirit of Fuchs \cite{Fuchs:CohomBraidModtwo},
or as chain complex associated with the Salvetti complex \cite{Salvetti} of the $n$-th braid arrangement.
\begin{defn}
 \label{defn:Ch}
An \emph{ordered partition} of $\set{1,\dots,n}$ of degree $1\leq k\leq n$ is a partition of $\set{1,\dots, n}$ into $n-k$
 non-empty subsets $(\pi_1,\dots,\pi_{n-k})$, where each \emph{piece} $\pi_r$ is endowed with a total order.
 
 For $a,b\in\pi_r$ we write $a\prec b$ if $a$ \emph{precedes} $b$ in the order associated with $\pi_r$, and
 we keep writing $a<b$ if $a$ is smaller than $b$ as natural numbers.

 We define the chain complex $\Chb=\Chb(n)$. Let $\Ch_k$ be the free abelian group with one generator (also called \emph{cell})
 for each ordered partition of $\set{1,\dots,n}$ of degree $k$.
 
In order to describe the boundary maps of $\Chb$ it is enough, for any two ordered partitions $(\pi_r)_{1\leq r\leq n-k}$
and $(\pi'_r)_{1\leq r\leq n-k+1}$
of degree $k$ and $k-1$ respectively, to give a formula for the \emph{boundary
index} $[\partial(\pi_r)\colon(\pi'_r)]$,
i.e. the coefficient of $(\pi'_r)$ in $\partial(\pi_r)$. There are two possibilities:
\begin{itemize}
 \item $(\pi'_r)$ is obtained from $(\pi_r)$ by
 \begin{itemize}
  \item  splitting some piece $\pi_l$ into two pieces $\pi'_l$ and $\pi'_{l+1}$,
 each having as total order the restriction of the total order on $\pi_l$;
 \item setting $\pi'_r=\pi_r$ for $r<l$ and $\pi'_r=\pi_{r-1}$ for $r>l+1$, with the same total orders.
  \end{itemize}
  Then
  \[
   [\partial(\pi_r)\colon(\pi'_r)]=(-1)^{l+sgn(\pi_l;\pi'_l)}=\pm 1,
  \]
 where, for an ordered set $(A,\prec)$ and a subset $B$, we define $sgn(A,B)$ as the parity of the number
 of couples $(a,b)$ of elements of $A$ with $b\prec a$, $b\in B$ and $a\not\in B$.
 \item $(\pi'_r)$ is not obtained from $(\pi_r)$ as before. Then $[\partial(\pi_r)\colon(\pi'_r)]=0$.
  \end{itemize}
\end{defn}
The chain complex $\Chb$ is the cellular chain complex of the Salvetti complex $\Sal_n$:
it is a finite cell complex contained in $F_n$, onto which $F_n$ deformation retracts \cite{Salvetti}.

Alternatively, in the spirit of Fuchs \cite{Fuchs:CohomBraidModtwo}, one can consider the following stratification of $F_n$.
For every ordered partition $(\pi_r)_{1\leq r\leq n-k}$ of some degree $k$, we consider the subspace $e(\pi_r)\subset F_n$
consisting of all configurations $(z_1,\dots, z_n)$ satisfying the following properties:
\begin{itemize}
 \item there are exactly $n-k$ vertical lines in $\C$ passing through some of the $n$ points;
 \item for $1\leq r\leq n-k$, the $r$-th vertical line from left contains precisely the points $z_i$ with
 $i\in\pi_r$, and these points are assembled from the top to the bottom according to the total order $\prec$.
\end{itemize}
In particular for configurations $(z_i)\in e(\pi_r)$ the following properites hold:
\begin{itemize}
 \item for all $i\neq j$, if $i\in\pi_l$ and $j\in\pi_{l'}$ with
$l<l'$, the point $z_i$ lies \emph{on left} of the point $z_j$, i.e. $\Re(z_i)<\Re(z_j)$;
\item for all $i\neq j$, if both $i$ and $j$ belong to the same
piece $\pi_l$ and if $i\prec j$, then $z_i$ lies \emph{above} $z_j$, or equivalently $\Im(z_i)>\Im(z_j)$.
\end{itemize}

The one-point compactification $F_n^+$ of $F_n$ has a CW structure given by the subspaces 
$e(\pi_r)$ together with the point at infinity $\infty$. The associated reduced cellular cochain complex is precisely the one described in
Definition \ref{defn:Ch}. Note that each cell $e(\pi_r)$ is modeled on the interior of a product of simplices
\[
 \Delta^{n-k}\times\Delta^{\abs{\pi_1}}\times\cdots\times\Delta^{\abs{\pi_{n-k}}}.
\]
The local coordinates
are the \emph{horizontal} positions of the $n-k$ vertical lines and the \emph{vertical} positions of the points $z_i$
on these lines. We regard $e(\pi_r)$ as a manifold of dimension $2n-k$; an orientation can be given by declaring a total order
on the simplicial local coordinates, and we choose the lexicographic order associated with the product structure written above.

With this convention, the boundary index $[\partial e(\pi'_r)\colon e(\pi_r)]$ in the reduced cellular chain complex of $F_n^+$ equals
the formula for $[\partial (\pi_r)\colon (\pi'_r)]$ in Definition \ref{defn:Ch}.

The space $F_n$ is a $2n$-dimensional manifold and its stratification by the subspaces $e(\pi_r)$ gives rise to a Poincar\'e-dual cell complex, which is
exactly the Salvetti complex $\Sal_n$.

The space $\Sal_n$ has a covering $\Sallog_n$ corresponding to the subgroup $\cPn$ of its fundamental group
$P_n$, and we can lift to $\Sallog_n$ the cell complex structure on $\Sal_n$. The group of deck transformations
$P_n^{\ab}$ acts freely on the cells of $\Sallog_n$; the associated chain complex is
a chain complex of finitely generated, free $R(n)$-modules.
\begin{defn}
\label{defn:Chlog}
 We define a chain complex $\Chlogb$. Let $\Chlog_k$ be the free abelian group with one generator
 (called \emph{cell})
 for each choice of the following set of data:
 \begin{itemize}
  \item an ordered partition $(\pi_r)_{1\leq r\leq n-k}$ of $\set{1,\dots,n}$ of degree $k$;
  \item integers $W_{ij}\in\Z$ for all $1\leq i<j\leq n$.
 \end{itemize}
The boundary map has a similar formula as in Definition \ref{defn:Ch}. Consider cells $(\pi_r,W_{ij})_{1\leq r\leq n-k}$
and $(\pi'_r,W'_{ij})_{1\leq r\leq n-k+1}$
in degrees $k$ and $k-1$ respectively.
\begin{itemize}
 \item Suppose that the ordered partition $(\pi'_r)$ is obtained from $(\pi_r)$ as
 in the first case in Definition \ref{defn:Ch},
 splitting some $\pi_l$ into $\pi'_l$ and $\pi'_{l+1}$. Suppose that for all $i<j$ 
 satisfying
 \begin{itemize}
  \item  $i,j\in\pi_l$;
  \item $i\prec j$ in $\pi_l$;
  \item $i\in\pi'_l$ and $j\in\pi'_{l+1}$
 \end{itemize}
 we have $W'_{ij}=W_{ij}+1$. Finally, suppose that for all \emph{other} couples of indices $i<j$
 we have $W'_{ij}=W_{ij}$.
 Then
  \[
   [\partial(\pi_r,W_{ij})\colon(\pi'_r,W'_{ij})]=(-1)^{l+sgn(\pi_l;\pi'_l)}=\pm 1.
  \]
 \item If $(\pi'_r,W'_{ij})$ cannot be obtained from $(\pi_r,W_{ij})$ as before, then
the boundary index is zero.
\end{itemize}
\end{defn}
Similarly as before, we can stratify $\Fnlog$ as follows: for all $(\pi_r,W_{ij})_{1\leq r\leq n-k}$ as in Definition
\ref{defn:Chlog}, consider the subspace $e(\pi_r,W_{ij})$ of $\Fnlog$ determined by the following properties:
\begin{itemize}
 \item $e(\pi_r,W_{ij})$ is a connected component of $p^{-1}(e(\pi_r))$, where $p\colon\Fnlog\to F_n$ is the usual covering map;
 \item for all $i<j$, there exists a configuration $((z_i),(w_{ij}))\in e(\pi_r,W_{ij})$, depending on $i$ and $j$,
 such that one of the following four situations occurs, depending on the position of $i$ and $j$ in the ordered partition $(\pi_r)$:
 \begin{itemize}
 \item $z_j=z_i+1$ and $w_{ij}=2\pi\sqrt{-1}\pa{W_{ij}}$, assuming $i\in\pi_l$ and $j\in\pi_{l'}$ for some $l<l'$;
 \item $z_j=z_i+\sqrt{-1}$ and $w_{ij}=2\pi\sqrt{-1}\pa{W_{ij}+\frac 14}$, assuming $i,j\in\pi_l$ for some $l$, and $j\prec i$;
 \item $z_j=z_i-1$ and $w_{ij}=2\pi\sqrt{-1}\pa{W_{ij}+\frac 12}$, assuming $i\in\pi_l$ and $j\in\pi_{l'}$ for some $l>l'$;
 \item $z_j=z_i-\sqrt{-1}$ and $w_{ij}=2\pi\sqrt{-1}\pa{W_{ij}+\frac 34}$, assuming $i,j\in\pi_l$ for some $l$, and $i\prec j$.
 \end{itemize}
\end{itemize}
This stratification is the pull-back along $p$ of the stratification on $F_n$.
 We can add a point $\infty$ to $\Fnlog$ and obtain a space $\pa{\Fnlog}^+$ with a CW structure with
the cells $e(\pi_r,W_{ij})$ together with the point $\infty$.

The space $\pa{\Fnlog}^+$ is not the one-point compactification of $\Fnlog$,
but it is universal among topological spaces satisfying the following properties:
\begin{itemize}
 \item $\pa{\Fnlog}^+$ is obtained from $\Fnlog$ by adding one point $\infty$;
 \item for every $X\subset \Fnlog$ meeting finitely many strata $e((\pi_r),(W_{ij}))$, the closure of $X$ in
 $\pa{\Fnlog}^+$ is compact.
\end{itemize}

The genuine one-point compactification of $\pa{\Fnlog}$ would have a coarser topology than $\pa{\Fnlog}^+$, and in particular
it would not have the topology of a CW complex.

The chain complex $\Chlog$ coincides with the complex of reduced, compactly supported cochains of $\pa{\Fnlog}^+$;
the formulas for the indices are the same because we lift the canonical orientations of cells $e(\pi_r)\subset F_n$
to their preimages along $p$. The manifold $\Fnlog$ is stratified by the spaces $e((\pi_r),W_{ij})$ and there is a Poincar\'e
dual cell complex, which is
precisely the covering $\Sallog_n$ of the Salvetti complex $\Sal_n$.

Putting together all $\Z$-summands generated by cells $(\pi_r,W_{ij})\in\Chlog$ for fixed $(\pi_r)$ and varying $(W_{ij})$
we obtain one $R(n)$-summand of $\Chlogb$: the action of $P^{\ab}_n$ on this summand is analogous to the one
discussed for the space $\Fnlog$ (see the discussion preceding Lemma \ref{lem:Fnlog=BcPn}): multiplication times
$t_{kl}$ consists in shifting the number $W_{kl}$ by 1, while keeping the other numbers $W_{ij}$ as well as the 
ordered partition $(\pi_r)$.

We note that $\Chlogb$ is a chain complex of finitely generated, free
$R(n)$-modules; a $R(n)$-basis is given by those elements $(\pi_r,W_{ij})\in\Chlogb$ with $W_{ij}=0$ for all $i,j$;
we call these basis elements $(\pi_r,0)\in\Chlogb$ to distinguish them from the elements $(\pi_r)\in\Chb$
generating $\Chb$ over $\Z$.

The differentials of $\Chlogb$ with respect to the basis of the elements $(\pi_r,0)$
are expressed in a similar way as in Definition
\ref{defn:Chlog}, but boundary indices are no longer always equal to $0$ or $\pm 1$, rather
they can take the form of a product of some variables $t_{ij}^{\pm 1}$, with a sign $\pm 1$
determined in the same way as in Definition \ref{defn:Chlog}.
It is however still true that all boundary indices of $\Chlogb$ are either $0$ or invertible elements of $R(n)$.

There is a natural map $\Chlogb\to\Chb$ of chain complexes of abelian groups, mapping the generator
$(\pi_r,W_{ij})$ to the generator $(\pi_r)$: this map is induced by the covering map $\Sallog_n\to\Sal_n$, which by
construction is a cellular map.

\begin{defn}
\label{defn:tChnlog}
The chain complex $\Chlogb$ contains a subcomplex $\tChlogb$ of free abelian groups generated by cells $(\pi_r,W_{ij})$
such that:
\begin{itemize}
 \item there are indices $l<l'$ with $1\in\pi_l$ and $2\in\pi_{l'}$;
 \item $W_{12}=0$.
\end{itemize}
\end{defn}
Note that $\tChlogb$ is a subcomplex of abelian groups, and in particular is closed along boundary maps:
if $(\pi_r,W_{ij})$ is a generator of $\tChlogb$, then $1$ and $2$ already belong to different pieces of
the partition $(\pi_r)$, so that $W_{12}$ cannot change along boundaries, according to Definition \ref{defn:Chlog}.
The degrees of cells in $\tChlogb$ range from $0$ to $n-2$, because there are always at least two pieces in the partition.

\begin{lem}
 \label{lem:tChnlog=ChtFnlog}
The chain complex $\tChlogb$ computes the homology of $\tFnlog$.
\end{lem}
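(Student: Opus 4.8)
The plan is to realise $\tChlogb$ as the complex of reduced, compactly supported cochains of a stratified model for $\tFnlog$, in exact analogy with the passage from $F_n$ to $\Fnlog$ carried out before Definition \ref{defn:tChnlog}. Recall that $\tFnlog$ is the subspace of $\Fnlog$ cut out by $z_1=0$, $z_2=1$ and $w_{12}=0$. Since $\Fnlog$ is a complex manifold of complex dimension $n$ (it is a covering of the open complex manifold $F_n$), and since $z_1=0$, $z_2=1$ are two holomorphic conditions while $w_{12}=0$ merely selects one sheet among $w_{12}\in 2\pi\sqrt{-1}\,\Z$, the space $\tFnlog$ is a closed complex submanifold of complex codimension $2$, hence a real manifold of dimension $2(n-2)$. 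I will equip it with the stratification induced by restricting the strata $e(\pi_r,W_{ij})$ of $\Fnlog$, and show that the resulting cochain complex is $\tChlogb$.

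First I would intersect each stratum $e(\pi_r,W_{ij})$ with $\tFnlog$ and identify the nonempty pieces. A configuration with $z_1=0$ and $z_2=1$ has $\Im(z_1)=\Im(z_2)$ and $\Re(z_1)<\Re(z_2)$, so the points $1$ and $2$ must lie on distinct vertical lines with the line of $1$ to the left; this forces $1\in\pi_l$ and $2\in\pi_{l'}$ for some $l<l'$. Moreover $z_2-z_1=1$ together with $w_{12}=0$ forces $W_{12}=0$. Conversely, on any stratum meeting these two conditions the slice $z_1=0$, $z_2=1$ is transverse: in the local simplicial coordinates of $e(\pi_r)$ the four quantities $\Re(z_1),\Im(z_1),\Re(z_2),\Im(z_2)$ are free coordinates (the horizontal positions of the lines of $1$ and $2$, and the heights of $1$ and $2$ on those lines), and setting them to $0,0,1,0$ carves out a nonempty subcell. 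Thus the nonempty intersections $e(\pi_r,W_{ij})\cap\tFnlog$ are indexed exactly by the generators of $\tChlogb$ from Definition \ref{defn:tChnlog}, and each is an open cell of real dimension $2(n-2)-k$, i.e. of codimension $k$ in $\tFnlog$.

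Next I would assemble these cells, together with a point $\infty$, into a space $(\tFnlog)^+$ built just as $(\Fnlog)^+$ was, and identify its reduced, compactly supported cochain complex with $\tChlogb$. The generators have just been matched. For the differentials one uses the observation recorded after Definition \ref{defn:tChnlog}: because $1$ and $2$ already lie in different pieces, splitting any piece keeps $1$ to the left of $2$ and leaves $W_{12}$ unchanged, so the boundary operator of $\Chlogb$ sends each generator of $\tChlogb$ to a combination of generators of $\tChlogb$, with no terms lost. Hence the incidence numbers of the stratification of $\tFnlog$ are precisely the restrictions of the boundary indices of Definition \ref{defn:Chlog}, which are by definition the differentials of the subcomplex $\tChlogb$. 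As in the cases of $F_n$ and $\Fnlog$, Poincar\'e duality for the stratified manifold $\tFnlog$ then produces a dual cell complex onto which $\tFnlog$ deformation retracts and whose cellular chain complex is $\tChlogb$; therefore $\tChlogb$ computes $H_*(\tFnlog)$, and by Lemma \ref{lem:tFnlog&Fnlog} this is $H_*(\cPn)$.

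The step I expect to be the main obstacle is the bookkeeping of orientations in the identification of incidence numbers. One must check that the canonical orientations of the cells $e(\pi_r,W_{ij})$, fixed by the lexicographic order on their simplicial coordinates, restrict along the transverse slice $\tFnlog$ to orientations for which the incidence numbers agree with the boundary indices of Definition \ref{defn:Chlog} on the nose, and not merely up to a cell-dependent sign. Concretely this amounts to checking that the four normal coordinates $\Re(z_1),\Im(z_1),\Re(z_2),\Im(z_2)$ can be split off consistently across all cells, so that the induced orientation convention is compatible with the splitting-of-pieces boundary formula. Once this sign computation is settled, the identification of $\tChlogb$ with the cochain complex of $(\tFnlog)^+$ is immediate.
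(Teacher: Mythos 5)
Your argument is correct in outline and is essentially the paper's strategy — stratify, pass to the Fuchs-style one-point ``compactification'', and dualize — but you organize it differently: you slice the covering space $\Fnlog$ directly along $z_1=0$, $z_2=1$, $w_{12}=0$, whereas the paper first descends to the base. The paper identifies $\tFnlog$ as the connected covering, corresponding to $\cPn$, of the fiber $F_{n-2}(\C\setminus\set{0,1})$ of $\psi_{12}$, observes that this fiber is the complement of the hyperplane arrangement $\set{z_i=0,\ z_i=1,\ z_i=z_j}$ in $\C^{n-2}$, takes the Salvetti complex $\tilde\Sal_n$ of that arrangement as a citable, already-oriented $(n-2)$-dimensional deformation retract, checks that its cellular chain complex is the subcomplex of $\Chb$ cut out by the first condition of Definition \ref{defn:tChnlog}, and only then pulls the cell structure back along the covering to get $\tChlogb$. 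The payoff of that detour is exactly the step you flag as your main obstacle: the existence of the dual cell complex, the deformation retraction, and the consistency of signs are outsourced to the general theory of Salvetti complexes of real arrangements rather than verified by hand on the non-compact, infinite-type stratified space $\tFnlog$; the retraction onto an $(n-2)$-dimensional complex is moreover reused verbatim in Corollary \ref{cor:cdcPn}. Your slice-first route buys a slightly more self-contained picture (the identification of which strata survive, the transversality and dimension count, and the fact that $\tChlogb$ is closed under the boundary are all correct as you state them), but to finish it you would still have to carry out the orientation bookkeeping you postpone, or else reduce to the arrangement picture after all — at which point the two proofs coincide.
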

\begin{proof}
The space $\tFnlog$ can be also defined as follows. Let $F_{n-2}(\C\setminus\set{0,1})$ be the
subspace of $F_n$ of configurations $(z_1,\dots, z_n)$ with $z_1=0$ and $z_2=1$. The space
$F_{n-2}(\C\setminus\set{0,1})$
is the ordered configuration space of $n-2$
points in the $2$-punctured plane, so it is the fiber over $(z_1=0,z_2=1)$ of the
bundle map $\psi_{12}\colon F_n\to F_2$ forgetting
all points but the first two (see Definition \ref{defn:Pnab}).

The space $F_{n-2}(\C\setminus\set{0,1})$ is aspherical, and its fundamental group is
the kernel of the map induced by $\psi_{12}$ on fundamental groups; moreover
$H_1(F_{n-2}(\C\setminus\set{0,1}))\simeq\Z^{{\binom{n}{2}}-1}$, where the isomorphism is
exhibited by the collection of maps $\psi_{ij}$ with $(i,j)\neq(1,2)$.

{The commutator subgroup of $\pi_1\pa{F_{n-2}(\C\setminus\set{0,1})}$ can be identified with\unskip\parfillskip 0pt\par}
\noindent $\cPn$, and $\tFnlog$ is the covering of $F_{n-2}(\C\setminus\set{0,1})$ corresponding
to this group.

The space $F_{n-2}(\C\setminus\set{0,1})$ is the complement in $\C^{n-2}$
of a hyperplane arrangement: using $z_3,\dots,z_n$ as coordinates of $\C^{n-2}$
we are considering the following hyperplanes with real equations
\begin{itemize}
 \item $z_i=0$, for $3\leq i\leq n$;
 \item $z_i=1$, for $3\leq i\leq n$;
 \item $z_i=z_j$, for $3\leq i<j\leq n$.
\end{itemize}
Hence also $F_{n-2}(\C\setminus\set{0,1})$ deformation retracts onto a Salvetti complex, that we call $\tilde\Sal_n\subset F_{n-2}(\C\setminus\set{0,1})$.
Using the definition of the Salvetti complex \cite{Salvetti}
it is straightforward to check that the cellular chain complex $\tChb$ of $\tilde\Sal_n$ is isomorphic to the subcomplex of $\Chb$ generated
by cells $(\pi_r)$ satisfying the first condition of Definition \ref{defn:tChnlog}.

Another possibility is the following.
For every ordered partition $(\pi_r)$ satisfying the first condition of Definition \ref{defn:tChnlog}, we can consider the subspace
\[
\tilde{e}(\pi_r)\subset F_{n-2}(\C\setminus\set{0,1})
\]
containing configurations $(z_3,\dots,z_n)$
such that the point $(0,1,z_3,\dots,z_n)\in F_n$ belongs to the subspace $e(\pi_r)$. The subspaces $\tilde{e}(\pi_r)$, together
with the point at infinity $\infty$, give a CW structure of the one-point compactification $F_{n-2}(\C\setminus\set{0,1})^+$
of the manifold $F_{n-2}(\C\setminus\set{0,1})$. The reduced cellular cochain complex $\tChb$ of the space $F_{n-2}(\C\setminus\set{0,1})^+$
is by construction isomorphic to the subcomplex of $\Chb$ generated by cells $(\pi_r)$ satisfying the first condition of Definition \ref{defn:tChnlog},
up to a shift in dimension due to the fact that $F_n$ has (real) dimension $2n$, whereas $F_{n-2}(\C\setminus\set{0,1})$
has dimension $2n-4$. The Salvetti complex $\tilde\Sal_n$ is the Poincar\'{e} dual of the cell decomposition of $F_{n-2}(\C\setminus\set{0,1})^+$,
and its cellular chain complex is also isomorphic to $\tChb$.

We can now restrict the covering $p\colon\Fnlog\to F_n$ first to a connected covering $p\colon\tFnlog\to F_{n-2}(\C\setminus\set{0,1})$,
and then to a connected covering $\tilde\Sal^{\log}_n\to\tilde\Sal_n$. Note that $\tFnlog$ is only one connected component of
$p^{-1}\pa{F_{n-2}(\C\setminus\set{0,1})}\subset\Fnlog$: there is indeed one connected component for any fixed value of $w_{12}\in2\pi\sqrt{-1}\Z$.

We pull back the cell structure on $\tilde\Sal_n$ along $p$ to a cell structure on $\tilde\Sal^{\log}_n$; thus 
the chain complex associated with $\tilde\Sal^{\log}_n$ is precisely $\tChlogb$.
\end{proof}

We define filtrations on the chain complexes that we have introduced.

\begin{defn}
\label{defn:filtrationsCh}
For each generator $(\pi_r)_{1\leq r\leq n-k}$ of $\Chb$ there is an index $l$ such that $1\in\pi_l$: we denote $\i(\pi_r)=l$.

We filter $\Chb$ in the following way: a generator $(\pi_r)_{1\leq r\leq n-k}$ in some degree $k$ has \emph{height} $p$, with $0\leq p\leq n-1$,
if there are exactly $p$ indices $i\in\pi_{\i(\pi_r)}$ such that $i\prec 1$. Note that by Definition \ref{defn:Ch}
the height can only decrease along boundaries in $\Chb$.

In the same way we can filter the chain complex $\Chlogb$: a generator $(\pi_r,W_{ij})$ has the same height as the corresponding
generator $(\pi_r)$ of $\Chb$. Note that we obtain a $P_n^{\ab}$-invariant filtration on $\Chlogb$: in other words
$\Chlogb$ becomes a filtered chain complex of $R(n)$-modules.
\end{defn}

The chain complex $\tChlogb$ has a natural action of the group
$H_1(F_{n-2}(\C\setminus\set{0,1}))\simeq\Z^{{\binom{n}{2}}-1}$; as we have already seen,
the group $H_1(F_{n-2}(\C\setminus\set{0,1}))$ can be identified with the
kernel of the map $\psi_{12}\colon P^{\ab}_n\to \Z$, and is generated by elements
$\tilde t_{ij}$ for $1\leq i<j\leq n$ with $(i,j)\neq (1,2)$. Hence $\tChlogb$ can be seen as
a chain complex of free $\tR(n)-$modules.

\begin{defn}
 \label{defn:Omega&filtration}
We consider $\tChlogb$ as a chain complex of free $\tR(n)-$modules and call $\Omega$ the basis containing
those elements $(\pi_r,0)\in\Chlogb$ that lie in $\tChlogb$.

The chain complex $\tChlogb$ inherits a filtration from
$\Chlogb$, with heights $p$ ranging from $0$ to $n-2$:  this is a filtration
in $\tR(n)-$modules.

We call $\fF_p\tChlogb\subset \tChlogb$ the subcomplex generated by cells of height $\leq p$,
and $\fF_p/\fF_{p-1}\tChlogb$ the $p-$th filtration stratum.
\end{defn}

Note that $\Omega$ is a filtered basis for $\tChlogb$.

\section{Morse flows}
In this section we simplify the complex $\tChlogb$ to a chain complex with fewer generators:
we use Forman's discrete Morse theory, which was first introduced in \cite{Forman};
see \cite{Knudson} or \cite{Kozlov} for an introduction to discrete Morse theory.
The Morse complex that we present has already appeared in a similar way in \cite{Djawadi} and \cite{Lofano}.
\begin{defn}
\label{defn:Morseflow}
 Recall from Definition \ref{defn:Omega&filtration} that $\Omega$ is a basis for $\tChlogb$ as
 a chain complex of finitely generated,
 free $\tR(n)$-modules. 
 For a cell $\fe=(\pi_r,0)\in\Omega$, the index $\i(\fe)$ was introduced in Definition \ref{defn:filtrationsCh}.
 We define a matching $\M$ on $\Omega$:
 \begin{itemize}
  \item a cell $\fe=(\pi_r,0)$ is critical if $\iota(\fe)=1$ (i.e. $1\in\pi_1$), and if $1$ is the last element
  of $\pi_1$ according to $\prec$ (i.e. $i\prec 1$ for all $i\in\pi_1$ with $i\neq 1$);
  \item a cell $\fe=(\pi_r,0)$ is collapsible if $1$ is
  not the last element of $\pi_{\i(e)}$. In this case the redundant partner
  of $\fe$ is $\fe'=(\pi'_r,0)$, where $(\pi'_r)$ is obtained from $(\pi_r)$ by splitting
  $\pi_{\i(\fe)}$ into $\pi'_{\i(\fe)}=\set{i\in\pi_l\,|\,1\prec i}$ and
  $\pi'_{\i(\fe)+1}=\set{i\in\pi_l\,|\, i\preceq 1}$,
  as in Definition \ref{defn:Chlog} with $l=\i(\fe)$, and $\prec$ is restricted to the two pieces.
  Informally, we push all elements $i$ lying below $1$ to the left.
  Note that $\i(\fe')=\i(\fe)+1\geq 2$. We write $\fe'\nearrow \fe$, meaning that the couple $(\fe',\fe)$ is in $\M$.
  \item a cell $\fe=(\pi_r,0)$ is redundant if $\i(\fe)\geq 2$ and $1$ is
  the last element of $\pi_{\i(\fe)}$ according to $\prec$. In this case the collapsible partner
  of $\fe$ is $\fe'=(\pi'_r,0)$, where $(\pi'_r)$ is obtained from $(\pi_r)$ by concatenating
  $\pi_{\i(\fe)}$ and $\pi_{\i(\fe)-1}$ into $\pi'_{\i(\fe)-1}$: on the new set $\pi'_{\i(\fe)-1}$ the order $\prec$
  is defined by extending
  $\prec$ on $\pi_{\i(\fe)}$ and $\pi_{\i(\fe)-1}$ with the rule $i\prec j$ for all $i\in\pi_{\i(\fe)}$ and $j\in\pi_{\i(\fe)-1}$.
  In particular $1\prec j$ for all $j\in\pi_{\i(\fe)-1}$. Informally, we push the column on left of 1 underneath 1.
  We write $\fe\nearrow \fe'$.
 \end{itemize}
\end{defn}
 By Definition \ref{defn:Chlog} if two cells $\fe\nearrow \fe'$ are matched, then $[\partial \fe'\colon \fe]$
 is invertible in $\tR(n)$.
 
 To check that $\M$ is acyclic, note first that $\M$ is compatible with the filtration of the chain complex
 $\tChlogb$, hence it suffices to check that $\M$ is acyclic on each filtration stratum $\fF_p/\fF_{p-1}\tChlogb$.
 
 Let $\fe=(\pi_r,0)\nearrow \fe'=(\pi'_r,0)\searrow \fe''=(\pi''_r,0)$ be an alternating path of three distinct cells of degrees $k,k+1,k$,
 all having the same height $p$.
 This means that the redundant cell $\fe$ is matched with the collapsible cell $\fe'$, and that
 $[\partial \fe'\colon \fe'']\neq 0$. Suppose also that $\fe''$ is redundant.
 
 Then both $\fe$ and $\fe''$ are obtained from $\fe'$ by splitting precisely the piece $\pi'_{\i(\fe')}$ as in Definition \ref{defn:Chlog}:
 indeed $1$ is not the last element in $\pi'_{\i(\fe')}$, but is the last element of both $\pi_{\i(\fe)}$ and $\pi''_{\i(\fe'')}$.
 
 Moreover there are exactly two ways to split $\pi'_{\i(\fe')}$ in two pieces, so that the following conditions hold:
 \begin{itemize}
  \item $1$ becomes the last element of its piece;
  \item the height $p$ doesn't decrease, i.e. all elements preceding $1$ in $\pi'_{\i(\fe')}$ still belong to the same piece
  as $1$ and precede $1$. 
 \end{itemize}
The two pieces must be, in some order, $\set{i\in\pi'_{\i(\fe')} \,|\, i\preceq 1}$ and
 $\set{i\in\pi'_{\i(\fe')} \,|\, 1\prec i}$, and we can only choose which piece is split to the left and which to the right.
 
 If $\set{i\in\pi'_{\i(\fe')} \,|\, 1\prec i}$ is split to the left, then we get the redundant partner of $\fe'$, that is, $\fe$;
 in the other case we must get $\fe''$.
 
 We conclude that $\i(\fe)=\i(\fe')+1$, and $\i(\fe'')=\i(\fe')$; in particular $\i(\fe'')>\i(\fe)$. This shows
 that the matching is acyclic on each stratum $p$, because the index $\iota$ strictly increases along alternating paths.
 
 \begin{defn}
  \label{defn:MtChlog}
  We call $\MtChlogb$ the Morse complex associated with the acyclic matching $\M$: it is a chain complex of
  finitely generated, free $\tR(n)$-modules, with basis $\Omega^{\M}$ given by $\M$-critical cells
  in $\Omega$. The chain complex $\M\tChlogb$ is also a filtered chain complex of $\tR(n)$-modules:
  the subcomplex $\fF_p\M\tChlogb$ is generated
  by $\M$-critical cells of height $\leq p$, and the $p$-th filtration stratum is denoted by $\fF_p/\fF_{p-1}\MtChlogb$.
 \end{defn}

 We conclude this section by analysing more carefully the structure of the filtration strata.
 \begin{defn}
 \label{defn:ChbS}
  Let $S$ be a subset of $\set{2,\dots,n}$ containing $2$. We denote by $R(S)$ the ring $\Z[t_{ij}^{\pm 1}]_{i,j\in S,i<j}$.
  This is a domain and is naturally contained in $\tR(n)$; its quotient field is denoted by $\F(S)$,
  and there is an inclusion $\F(S)\subset\tF(n)$. In the particular case $S=\set{2}$ we have $R(S)=\Z$.
  
  Let $\Chb^S$ and $\Ch_{\bullet}^{\log,S}$ be defined in analogy with Definitions \ref{defn:Ch} and \ref{defn:Chlog} but using,
  instead of the set of indices $\set{1,\dots, n}$, its subset $S$. In particular generators of $\Chb^S$ are given
  by ordered partitions $(\pi_r)_{1\leq r\leq\abs{S}-k}$ of $S$; generators of $\Ch_{\bullet}^{\log,S}$
  are given by an ordered partition of $S$ together with a choice of integers $(W_{ij})$ for all $i<j$ with $i,j\in S$.
  
  Note that $\Ch_{\bullet}^{\log,S}$ is a chain
  complex of finitely generated, free $R(S)$-modules, supported in degrees ranging from $0$ to $|S|-1$.
  In the particular case $S=\set{2}$ we have that $\Ch_{\bullet}^{\log,S}$ consists of a copy of $\Z$ in degree 0.
 \end{defn}

 \begin{lem}
  \label{lem:structurefiltrationstrata}
 Let $0\leq p\leq n-2$; then there is an isomorphism of chain complexes of $\tR(n)$-modules
 \[
  \fF_p/\fF_{p-1}\MtChlogb\;\cong\; \bigoplus_{S}\pa{\tR(n)^{p!}\otimes_{R(S)}\Ch_{\bullet}^{\log,S}},
 \]
where the sum is taken over all sets $S\subset\set{2,\dots,n}$ with $|S|=n-p-1$ and $2\in S$.
This isomorphism shifts degrees by $-p$.
 \end{lem}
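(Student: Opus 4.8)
The plan is to produce the isomorphism first on the level of $\tR(n)$-bases and then to verify that it intertwines the differentials. By Definition \ref{defn:MtChlog} the stratum $\fF_p/\fF_{p-1}\MtChlogb$ is the free $\tR(n)$-module on the set of $\M$-critical cells of height exactly $p$, equipped with the induced differential. A critical cell $\fe=(\pi_r,0)$ satisfies $\i(\fe)=1$ with $1$ last in $\pi_1$, and, lying in $\tChlogb$, has $2\notin\pi_1$. Since every element of $\pi_1\setminus\set{1}$ precedes $1$, the height equals $\abs{\pi_1}-1$, so height $p$ is equivalent to $\abs{\pi_1}=p+1$. Setting $S=\set{2,\dots,n}\setminus\pa{\pi_1\setminus\set{1}}$ we obtain a subset $S\subseteq\set{2,\dots,n}$ with $2\in S$ and $\abs{S}=n-p-1$, and the datum of $\fe$ is equivalent to the triple given by $S$, a linear order on $\pi_1\setminus\set{1}=\set{2,\dots,n}\setminus S$ (there are $p!$ such orders), and the ordered partition $(\pi_2,\dots,\pi_{n-k})$ of $S$, which is exactly a generator of $\Ch_{\bullet}^{\log,S}$. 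If this $S$-partition has $m$ pieces then $\fe$ has degree $k=n-m-1$, whereas the associated generator of $\Ch_{\bullet}^{\log,S}$ lies in degree $\abs{S}-m=k-p$; this is the source of the degree shift by $-p$.

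Next I would analyse the induced differential. As the matching $\M$ respects the filtration and height cannot increase along $\partial$, the differential on $\fF_p/\fF_{p-1}\MtChlogb$ only retains the height-$p$ part of $\partial^{\M}\fe$. The decisive point is that any nontrivial split of $\pi_1$ strictly lowers the height: the new piece containing $1$ is a proper subset of $\pi_1$, hence has at most $p-1$ elements preceding $1$. Therefore every height-$p$ term of $\partial\fe$ arises by splitting a piece $\pi_r$ with $r\geq 2$, i.e. a piece of $S$; such a split leaves $\pi_1$ unchanged, so the resulting cell again has $\i=1$ with $1$ last in $\pi_1$ and is again $\M$-critical. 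In particular no redundant cell occurs at height $p$, so there is no gradient path to prolong and the induced differential coincides with the plain boundary $\partial$ restricted to the span of height-$p$ critical cells. Concretely it splits the pieces of the $S$-partition, exactly as the boundary of $\Ch_{\bullet}^{\log,S}$ does.

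It then remains to match the module structures and the signs. Splitting a piece $\pi_r\subseteq S$ modifies only the integers $W_{ij}$ with $i,j\in S$ (Definition \ref{defn:Chlog}), so the resulting boundary coefficients are products of variables $t_{ij}$ with $i,j\in S$ and hence lie in $R(S)$; the remaining integers $W_{ij}$, namely those with $(i,j)\neq(1,2)$ and not both indices in $S$, are never touched and provide free $\tR(n)$-directions, which is precisely the effect of the base change $\tR(n)\otimes_{R(S)}-$ applied to $\Ch_{\bullet}^{\log,S}$. The $p!$ frozen orders on $\pi_1\setminus\set{1}$ index $p!$ non-interacting copies, yielding the factor $\tR(n)^{p!}$, and summing over all admissible $S$ gives the right-hand side. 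The only delicate point, which I expect to be the main obstacle, is the sign: in the ambient partition the split piece occupies position $r$, while in $\Ch_{\bullet}^{\log,S}$ it occupies position $r-1$, so the factor $(-1)^{l}$ of Definition \ref{defn:Chlog} differs by a global $-1$ between the two complexes, whereas the $sgn$ contribution, being intrinsic to the split piece and its induced orders, is unchanged. Since boundaries drop degree by one, this uniform discrepancy is absorbed by twisting the bijection of bases by the sign $(-1)^{d}$ on the degree-$d$ part of each $\Ch_{\bullet}^{\log,S}$, after which the map becomes an isomorphism of filtered chain complexes of $\tR(n)$-modules.
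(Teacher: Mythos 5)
Your proof is correct and follows essentially the same route as the paper: you show that gradient paths between critical cells of equal height have length one, so the induced differential on $\fF_p/\fF_{p-1}\MtChlogb$ is the plain boundary restricted to splittings of pieces $\pi_l$ with $l\geq 2$, and you then decompose according to the content and the order of $\pi_1$. Your extra observation that reindexing the pieces introduces a uniform sign $-1$ on the differential, repaired by the degree-dependent sign twist, addresses a point the paper's proof passes over silently, and your fix is the standard correct one.
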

\begin{proof}
Recall that the differential in the chain complex $\MtChlogb$ is defined as follows: for two $\M$-critical
cells $\fe=(\pi_r,0)$ and $\fe'=(\pi'_r,0)$ in $\Omega^{\M}$ the boundary index $[\partial \fe'\colon \fe]$ is the sum of the weights
of all alternating paths from $\fe'$ to $\fe$.

If $\fe$ and $\fe'$ have the same height $p$, then
an alternating path $\fe'=\fe'_0\searrow \fe_0\nearrow \fe'_1\searrow\dots \searrow \fe_l=\fe$ must contain
only cells of height $p$. Since $\fe'_0$ is critical, $1$ is the last element of $\pi'_1$, and splitting
in two pieces $\pi'_1$ would let the height $p$ of $\fe'_0$ decrease to a smaller height in $\fe_0$:
hence $\fe_0$ is obtained from $\fe'$ by
splitting some other piece $\pi'_l$ with $l\geq 2$, and therefore $\fe_0$ is already critical, hence $\fe_0=\fe$.

Thus the differential in the chain complex $\fF_p/\fF_{p-1}\MtChlogb$ is isomorphic to the differential
obtained from Definition \ref{defn:Chlog} by allowing only a splitting in two pieces of some piece of the partition
$\pi_l$ with $l\geq 2$.

In particular we can split our chain complex  $\fF_p/\fF_{p-1}\MtChlogb$
into many subcomplexes according to \emph{which}
$p$ elements, all different from $2$, appear in $\pi_1$ and \emph{in which order} $\prec$, provided that $1$ is the last element of
$\pi_1$.

To determine one of these subcomplexes we can equivalently choose a set
$S\subset\set{2,\dots,n}$ of $n-p-1$ elements, with $2\in S$,
and declare that the other $p+1$ elements $i\in\set{1,\dots,n}$, including 1, are the elements of $\pi_1$.
Moreover there are exactly $p!$ ways to order these $p+1$ elements inside $\pi_1$, if we require $1$ to be the last in the order:
each of these possible choices of $\prec$ on $\pi_1$ gives rise to a different subcomplex.

Finally we note that each of these subcomplexes is isomorphic to the chain complex $\tR(n)\otimes_{R(S)}\Ch_{\bullet}^{\log,S}$, where the
isomorphism is given by mapping the $\M$-critical cell $(\pi_r,0)_{1\leq r\leq n-k}$ to the cell
$1\otimes (\pi_r,0)_{2\leq r\leq n-k}$: this map has degree $-p$.
\end{proof}

\section{The spectral sequence with coefficients in \texorpdfstring{$\tF(n)$}{tK(n)}}
\label{sec:tFspectral}
In this section we prove that $H_{n-2}(\cPn)\neq 0$. More precisely we prove the following theorem.
\begin{thm}
\label{thm:HtensortF}
For $n\geq 2$ the graded $\tF(n)$-vector space
\[
 \tF(n)\otimes_{\tR(n)}H_*(\cPn)
\]
has dimension $(n-2)!$ in degree $n-2$ and vanishes in all other degrees.
\end{thm}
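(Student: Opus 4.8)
The plan is to run the spectral sequence associated with the height filtration $\fF_\bullet$ on the Morse complex $\MtChlogb$, but after base change to the field $\tF(n)$. Since $\tF(n)$ is a localisation of $\tR(n)$ it is flat, so tensoring with $\tF(n)$ commutes with homology; hence $\tF(n)\otimes_{\tR(n)}\MtChlogb$ computes $\tF(n)\otimes_{\tR(n)}H_*(\cPn)$, and its height filtration yields a spectral sequence converging to this graded vector space. The filtration is bounded (heights range from $0$ to $n-2$), so convergence is automatic, and the whole argument reduces to the first page
\[
E^1_{p,q}=H_{p+q}\pa{\tF(n)\otimes_{\tR(n)}\fF_p/\fF_{p-1}\MtChlogb}.
\]

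Next I would feed Lemma \ref{lem:structurefiltrationstrata} into this formula. It identifies the $p$-th stratum, up to the degree shift $-p$, with $\bigoplus_S\tR(n)^{p!}\otimes_{R(S)}\Ch_{\bullet}^{\log,S}$, the sum running over subsets $S\subset\set{2,\dots,n}$ with $2\in S$ and $|S|=n-p-1$. Because $\tF(n)$ is flat over $R(S)$ (it sits in the tower $R(S)\subset\tR(n)\subset\tF(n)$, each step flat) and the shift $-p$ turns total degree $p+q$ into homological degree $q$ of $\Ch_{\bullet}^{\log,S}$, taking homology and factoring the base change through $\F(S)$ gives
\[
E^1_{p,q}\cong\bigoplus_S\tF(n)^{p!}\otimes_{\F(S)}\pa{\F(S)\otimes_{R(S)}H_{q}\pa{\Ch_{\bullet}^{\log,S}}}.
\]
So everything comes down to the $\F(S)$-localisation of the homology of $\Ch_{\bullet}^{\log,S}$.

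The key point is that $\Ch_{\bullet}^{\log,S}$ is literally the complex $\Chlogb$ built on the index set $S$ in place of $\set{1,\dots,n}$, so it computes the homology of the commutator subgroup of $P_{|S|}$ as a module over $R(S)\cong R(|S|)$, and $\F(S)$ is precisely its full fraction field. When $|S|\geq 2$, Lemma \ref{lem:H*istauntorsion} applied verbatim to the index set $S$ shows this homology is killed by $\pa{\prod_{i<j,\,i,j\in S}t_{ij}}-1$, a nonzero hence invertible element of $\F(S)$; therefore $\F(S)\otimes_{R(S)}H_*(\Ch_{\bullet}^{\log,S})=0$ and the whole summand of $E^1$ vanishes. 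Since $|S|=n-p-1$, the condition $|S|\geq 2$ is exactly $p\leq n-3$, so $E^1_{p,q}=0$ for all $p<n-2$. Only the top stratum $p=n-2$ survives, where $|S|=1$ forces $S=\set 2$: here $R(S)=\Z$, the product over the empty index set is trivial, and $\Ch_{\bullet}^{\log,\set 2}$ is just $\Z$ in degree $0$. That stratum is $\tR(n)^{(n-2)!}$ concentrated in a single degree, so after the shift and base change it contributes $\tF(n)^{(n-2)!}$ at $(p,q)=(n-2,0)$. With a single nonzero entry all differentials vanish, the sequence degenerates at $E^1=E^\infty$, and $\tF(n)\otimes_{\tR(n)}H_*(\cPn)$ is concentrated in degree $n-2$ with dimension $(n-2)!$.

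I expect the main obstacle to be the careful transfer of Lemma \ref{lem:H*istauntorsion} to the subcomplexes indexed by $S$: one must check that the rotation homotopy underlying that lemma genuinely applies to the configuration on the points of $S$, that $\F(S)$ really is the full fraction field of $R(S)$ so the torsion element becomes invertible, and that vanishing over $\F(S)$ propagates to vanishing over $\tF(n)$ through the flat field extension $\F(S)\subset\tF(n)$. Pinning down the degree shift $-p$, so that the surviving class lands in total degree $n-2$ and with the correct multiplicity $(n-2)!$ rather than elsewhere, also requires attention but is routine.
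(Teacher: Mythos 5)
Your proposal is correct and follows essentially the same route as the paper: the height filtration on $\tF(n)\otimes_{\tR(n)}\MtChlogb$, Lemma \ref{lem:structurefiltrationstrata} to identify the strata, the vanishing of $\F(S)\otimes_{R(S)}H_*(\Ch_{\bullet}^{\log,S})$ via the rotation argument of Lemma \ref{lem:H*istauntorsion} for $|S|\geq 2$, and the surviving $S=\set{2}$ stratum contributing $\tF(n)^{(n-2)!}$ in bidegree $(n-2,0)$. The points you flag as needing care (transferring Lemma \ref{lem:H*istauntorsion} to the index set $S$, the flat extension $\F(S)\subset\tF(n)$, and the degree shift $-p$) are exactly the ones the paper handles, and in the same way.
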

This means, in particular, that $H_{n-2}(\cPn)$ contains an embedded copy of $\tR(n)^{(n-2)!}$,
which for $n\geq 3$ is a free abelian group of infinite rank.

The following is an immediate consequence of Theorem \ref{thm:HtensortF}.
\begin{cor}
 \label{cor:cdcPn}
 For $n\geq 2$ the cohomological dimension of $\cPn$ is $n-2$.
\end{cor}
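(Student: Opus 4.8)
The plan is to establish the two inequalities $\cd(\cPn)\le n-2$ and $\cd(\cPn)\ge n-2$ separately: the upper bound by producing a low-dimensional classifying space, and the lower bound by feeding the degree-$(n-2)$ nonvanishing of Theorem \ref{thm:HtensortF} into a universal-coefficient argument. Since $\cd$ of a group is at most the dimension of any $K(\pi,1)$ and at least any degree in which integral homology is nonzero, everything reduces to facts already in hand.

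For the upper bound I would use that $\tFnlog$ is a classifying space for $\cPn$ (Lemma \ref{lem:tFnlog&Fnlog}) and that, by the construction in the proof of Lemma \ref{lem:tChnlog=ChtFnlog}, it deformation retracts onto the Salvetti complex $\tilde\Sal_n^{\log}$, whose cells are exactly the generators of $\tChlogb$. By the remark following Definition \ref{defn:tChnlog} these cells lie in degrees $0$ through $n-2$, so $\tilde\Sal_n^{\log}$ is a $K(\cPn,1)$ of dimension $n-2$. The cellular chain complex of its universal cover is then a free $\Z[\cPn]$-resolution of $\Z$ of length $n-2$, which gives $\cd(\cPn)\le n-2$.

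For the lower bound, Theorem \ref{thm:HtensortF} gives $\tF(n)\otimes_{\tR(n)}H_{n-2}(\cPn)\neq 0$, and since $\tF(n)$ is the fraction field of the domain $\tR(n)$ this forces the abelian group $H_{n-2}(\cPn)=H_{n-2}(\cPn;\Z)$ to be nonzero (indeed it contains $\tR(n)^{(n-2)!}$). I would then deduce $\cd(\cPn)\ge n-2$ from the universal coefficient theorem: taking as trivial coefficients the injective cogenerator $\Q/\Z$, the exact sequence
\[
0\to \mathrm{Ext}^1_{\Z}\pa{H_{n-3}(\cPn),\,\Q/\Z}\to H^{n-2}(\cPn;\Q/\Z)\to \mathrm{Hom}_{\Z}\pa{H_{n-2}(\cPn),\,\Q/\Z}\to 0
\]
exhibits a surjection onto $\mathrm{Hom}_{\Z}\pa{H_{n-2}(\cPn),\Q/\Z}$, which is nonzero precisely because $H_{n-2}(\cPn)\neq 0$. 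Hence $H^{n-2}(\cPn;\Q/\Z)\neq 0$ and $\cd(\cPn)\ge n-2$. (Equivalently, nonvanishing of $H_{n-2}(\cPn;\Z)$ says the homological dimension is at least $n-2$, and homological dimension never exceeds cohomological dimension.) Combining the two bounds yields $\cd(\cPn)=n-2$.

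Frankly there is no serious obstacle here: the genuine work is all inside Theorem \ref{thm:HtensortF}, and this corollary merely packages its degree-$(n-2)$ nonvanishing together with two standard facts about group cohomology. The only points requiring a word of care are the passage from the localized statement back to $H_{n-2}(\cPn;\Z)\neq 0$ (immediate, as noted), and the edge case $n=2$, where $\cPn=[P_2,P_2]$ is trivial because $P_2\cong\Z$ is abelian, so $\cd=0=n-2$ and the homology statement degenerates correctly with $H_0=\Z$.
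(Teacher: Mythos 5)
Your proposal is correct and follows essentially the same route as the paper: the lower bound from $H_{n-2}(\cPn)\neq 0$ (which the paper asserts directly and you justify via the universal coefficient theorem with $\Q/\Z$ coefficients) and the upper bound from the $(n-2)$-dimensional classifying space $\tilde\Sal^{\log}_n$. No issues.
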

\begin{proof}
 We have $\cd(\cPn)\geq n-2$ because $H_{n-2}(\cPn)\neq 0$. Moreover, as already seen in the
 proof of Lemma \ref{lem:tChnlog=ChtFnlog}, the space $\tFnlog$ deformation retracts onto
 the space $\tilde\Sal^{\log}_n$, which is a cell complex of dimension $n-2$; hence $\cd(\cPn)\leq n-2$.
\end{proof}

\begin{proof}[Proof of Theorem \ref{thm:HtensortF}]
We consider the filtered chain complex $\MtChlogb$. Since localisation
is exact we can compute $ H_*(\cPn)\otimes_{\tR(n)}\tF(n)$ as the homology of the chain complex
$\tF(n)\otimes_{\tR(n)}\MtChlogb$, which is a filtered chain complex of $\tF(n)$-vector spaces.

The first page of the associated Leray spectral sequence is
\[
E^1_{p,q}=H_{p+q}\pa{\fF_p/\fF_{p-1}\pa{\tF(n)\otimes_{\tR(n)}\MtChlogb}},
\]
and our aim is to show that the latter groups are all trivial, except for $p=n-2$ and $q=0$,
where we have
\[
H_{n-2}\pa{\fF_{n-2}/\fF_{n-3}\pa{\tF(n)\otimes_{\tR(n)}\MtChlogb}}\simeq\tF(n)^{(n-2)!}.
\]
Once this statement is proved, Theorem \ref{thm:HtensortF} follows immediately because the spectral
sequence collapses on its first page.

By Lemma \ref{lem:structurefiltrationstrata} the chain complex $\fF_{n-2}/\fF_{n-3}\pa{\MtChlogb}$
is isomorphic to the chain complex $\tR(n)^{(n-2)!}\otimes_{R_{\set{2}}}\Chb^{\log,\set{2}}$. Since
the ring $R\pa{{\set{2}}}$ is just $\Z$, and since the chain complex $\Chb^{\log,\set{2}}$ is just a copy
of $\Z$ in degree $0$, we have that the filtration stratum
$\fF_{n-2}/\fF_{n-3}\pa{\MtChlogb}$ is concentrated in degree $n-2$
and its homology is $\tR(n)^{(n-2)!}$, also concentrated in degree $n-2$.

Tensoring with $\tF(n)$ we have
that $E_{n-2,0}\simeq\tF(n)^{(n-2)!}$, and $E_{n-2,q}=0$ for all $q\neq 0$.

We want now to show that the chain complex $\fF_p/\fF_{p-1}\pa{\tF(n)\otimes_{\tR(n)}\MtChlogb}$
is acyclic for all $0\leq p\leq n-3$. By Lemma \ref{lem:structurefiltrationstrata} it suffices to prove that, for any set
$S\subset\set{2,\dots,n}$ containing $2$, the chain complex
\[
 \tF(n)\otimes_{\tR(n)}\tR(n)\otimes_{R(S)}\Ch_S
\]
is acyclic. We note that $\tF(n)$ contains $\F(S)$, so we can equally consider
\[
 \tF(n)\otimes_{\F(S)}\F(S)\otimes_{R(S)}\Ch_S
\]
and the latter is acyclic because $\F(S)\otimes_{R(S)}\Ch_S$ is acyclic by Lemma \ref{lem:H*istauntorsion},
and extending the field $\F(S)\subset\tF(n)$ is exact.
\end{proof}
We note that it was not necessary to localise $\tR(n)$ with respect to all non-zero elements,
i.e. passing from $\tR(n)$ to its quotient field $\tF(n)$.

\begin{defn}
\label{defn:torS}
 Let $S$ be a finite subset of $\set{2,\dots,n}$ containing $2$. We call
\[
 \tor_S=\left[\pa{\prod_{i,j\in S;\,i<j}t_{ij}}-1\right]\in \tR(n)\subset R(n).
\]
Define also
\[
 \tor_n=\prod_{S}\tor_S\in \tR(n)\subset R(n)
\]
where the product is extended over all subsets $S\subset\set{2,\dots,n}$ containing $2$.
\end{defn}
 
Then the same argument of the proof of Lemma \ref{lem:H*istauntorsion}
tells us that, for all subsets $2\in S\subset\set{2,\dots, n}$ with $S\neq \set{2}$, we have
\[
\tR(n)\left[{\tor_n}^{-1}\right]\otimes_{R(S)} H_*\pa{\Chb^{\log,S}}=0.
\]

Therefore we can
repeat the proof of Theorem \ref{thm:HtensortF} to show that
\[
 \tR(n)\left[\tor_n^{-1}\right]\otimes_{\tR(n)}H_*(\cPn)
\]

is concentrated in degree $n-2$, where it is equal to
$\tR(n)\left[\tor_n^{-1}\right]^{(n-2)!}$.

\section{Homology in lower degrees}
In this section we prove non-triviality of $H_*(\cPn)$ in all degrees $*\leq n-2$. More precisely,
we prove the following theorem.
\begin{thm}
 \label{thm:middle}
For all $1\leq *\leq n-2$ the group $H_*(\cPn)$ contains a free abelian group of
infinite rank.
\end{thm}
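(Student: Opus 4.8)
The plan is to bootstrap from the top-degree result already established, rather than to redo any spectral-sequence computation. Fix $1\leq l\leq n-2$ and set $m=l+2$, so that $3\leq m\leq n$. By Theorem \ref{thm:HtensortF} applied to $m$ (whose top nonvanishing homological degree is $m-2=l$), the group $H_l([P_m,P_m])$ contains an embedded copy of $\tR(m)^{l!}$. Since $m\geq 3$, the ring $\tR(m)$ is a Laurent polynomial ring in $\binom{m}{2}-1\geq 2$ variables, hence free of infinite rank as an abelian group; so $H_l([P_m,P_m])$ already contains a free abelian group of infinite rank. It then remains to transport this subgroup into $H_l(\cPn)$.

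To do this I would exploit the splitting of the Fadell--Neuwirth fibration recalled in the Preliminaries. Iterating the forgetful maps $\phi_i$ gives a map $\phi\colon F_n\to F_m$ that forgets the last $n-m$ points; this fibration admits a section $s\colon F_m\to F_n$ obtained by inserting the missing points far to the right of a given configuration (so their real parts exceed those of all existing points), whence $\phi\circ s=\mathrm{id}_{F_m}$. Passing to fundamental groups yields group homomorphisms $s_*\colon P_m\to P_n$ and $\phi_*\colon P_n\to P_m$ with $\phi_*\circ s_*=\mathrm{id}_{P_m}$.

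The key point is that any group homomorphism carries commutators to commutators, so $s_*$ and $\phi_*$ restrict to homomorphisms $[P_m,P_m]\to\cPn$ and $\cPn\to[P_m,P_m]$ whose composite is still the identity of $[P_m,P_m]$, being the restriction of $\mathrm{id}_{P_m}$. Applying homology functorially (through the classifying spaces) produces maps $H_l([P_m,P_m])\to H_l(\cPn)\to H_l([P_m,P_m])$ composing to the identity; hence $H_l([P_m,P_m])\to H_l(\cPn)$ is a split injection. Composing this with the inclusion of the infinite-rank free abelian subgroup from the first step, I conclude that $H_l(\cPn)$ contains a free abelian group of infinite rank, which is exactly the assertion.

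I expect the only delicate point to be the verification that the section restricts compatibly to the commutator subgroups and that the composite is \emph{literally} the identity on $[P_m,P_m]$; but once the splitting of the fibration is in hand this is formal, since the composite is induced by $\mathrm{id}_{P_m}$ and homology is functorial. The genuine mathematical content — infinite rank in the top degree $m-2$ for the smaller group — is supplied entirely by Theorem \ref{thm:HtensortF}, so no further localisation or Morse-theoretic input is needed to reach the lower degrees.
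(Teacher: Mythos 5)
Your argument is correct and is essentially the paper's own proof: both fix the lower degree $l$, invoke Theorem \ref{thm:HtensortF} for the smaller pure braid group $P_{l+2}$, and use the section of the Fadell--Neuwirth forgetful fibration (adding points far to the right) to get a split injection $H_l([P_{l+2},P_{l+2}])\hookrightarrow H_l(\cPn)$ after restricting to commutator subgroups. No substantive difference from the paper's route.
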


\begin{proof}
 By Theorem \ref{thm:HtensortF} we know that $H_{n-2}(\cPn)$ contains a free abelian group of infinite rank.
 In the following we fix $3\leq k\leq n-1$ and prove that $H_{k-2}(\cPn)$ has the same property.
 
 Consider the map $\psi^n_k\colon F_n\to F_k$ that forgets the last $n-k$ points of a configuration (compare with
 the maps $\psi_{ij}$ from Definition \ref{defn:Pnab}):
 \[
  \psi^n_k(z_1,\dots,z_n)=(z_1,\dots,z_k)\in F_k.
 \]
 The map $\psi^n_k$ is a fibration (see \cite{FadellNeuwirth}) and there is a section $\sigma^k_n\colon F_k\to F_n$ given by
 adjoining $n-k$ points \emph{far on the right}: formally we set $M(z_1,\dots,z_k)=\max_{i=1}^k\abs{z_i}$ and then we define
 \[
  \sigma^k_n(z_1,\dots,z_k)=(z_1,\dots,z_k,M+1,\dots,M+n-k)\in F_n.
 \]
 We have induced maps on fundamental groups $\psi^n_k\colon P_n\to P_k$ and $\sigma^k_n\colon P_k\to P_n$; the composition
 $\psi^n_k\circ\sigma^k_n\colon P_k\to P_k$ is the identity of $P_k$.
 
 The maps $\psi^n_k$ and $\sigma^k_n$ restrict to maps between commutator subgroups; in particular the composition
 $\psi^n_k\circ\sigma^k_n\colon [P_k,P_k]\to [P_k,P_k]$ is the identity of $[P_k,P_k]$.
 
 This implies that the induced map in homology
 \[
  \pa{\sigma^k_n}_*\colon H_{k-2}([P_k,P_k])\to H_{k-2}(\cPn)
 \]
 is injective, and again by Theorem \ref{thm:HtensortF} we know that $\colon H_{k-2}([P_k,P_k])$ contains a free
 abelian group of infinite rank.
 \end{proof}

 \section{Future directions}
Computing the homology of $\cPn$ as a $R(n)$-module seems a difficult
task, in particular because $R(n)$ is not a principal ideal domain and we lack 
a good classification of finitely generated modules over $R(n)$.
We only observe that $H_*(\cPn)$ is finitely generated over $R(n)$:
indeed the chain complex $\Chlogb$ is finitely generated over $R(n)$, and $R(n)$
is a noetherian ring. 

Computing $H_*(\cPn)$ directly as an abelian group seems not to be easy either. In Theorems \ref{thm:HtensortF}
and \ref{thm:middle} we have proved that $H_k(\cPn)$ contains a free abelian group of infinite rank for $1\leq k\leq n-2$;
we conjecture that $H_*(\cPn)$ is indeed a free abelian group,
and in particular is torsion-free.
Our conjecture is related to a conjecture by Denham \cite{Denham02} on the structure of the homology
of the Milnor fibre of a complexified real arrangement; this conjecture was investigated also by
Settepanella \cite{Settepanella09}. Note that for $n=3$ our conjecture holds, as $\cPn$ is a free group.

Finally, it would also be interesting to study $H_*(\cPn)$ as a representation.
Denote by $B_n$ Artin's braid group on $n$ strands \cite{Artin}, and by
$\mathfrak{S}_n$ the $n$-th symmetric group. There is a short exact sequence
\[
1\to P_n\to B_n\to\mathfrak{S}_n\to 1.
\]
In particular $P_n$ is a normal subgroup of $B_n$; since $\cPn$
is a characteristic subgroup of $P_n$, $\cPn$ is also normal in $B_n$ and we have a short exact sequence
\[
 1\to P_n^{\ab}\cong\Z^{\binom{n}{2}} \to B_n/\cPn\to \mathfrak{S}_n\to 1.
\]
{It would be interesting to understand $H_*(\cPn)$ as a representation of \unskip\parfillskip 0pt \par}
\noindent $B_n/\cPn$.

\section*{Acknowledgments}
The author would like to thank Carl-Friedrich B\"{o}digheimer, Filippo Callegaro, Florian Kranhold, Mark Grant, Davide Lofano,
Martin Palmer, Giovanni Paolini, Oscar Randal-Williams, David Recio-Mitter and Mario Salvetti for useful discussions
and precious comments during the preparation of this paper.

\bibliography{bibliography.bib}{}
\bibliographystyle{plain}
\end{document}